\documentclass[oneside,12pt]{amsart}
\usepackage{eurosym}
\usepackage{amsfonts} 
\usepackage{amsmath,amssymb,amsthm,xcolor}
\usepackage{latexsym}
\usepackage{float}
\usepackage{placeins}
\usepackage{bm}
\usepackage[utf8]{inputenc}
\usepackage[british]{babel}
\usepackage{a4wide}
\usepackage{pdfsync}
\usepackage[colorlinks=true,linkcolor=blue,citecolor=blue]{hyperref}
\usepackage[all]{xy}
\usepackage{parskip}
\usepackage{multirow}
\usepackage{array}
\usepackage[toc,page]{appendix}
\usepackage{graphicx}
\usepackage{comment}
\usepackage[mathscr]{euscript}
\usepackage{tikz-cd}

\theoremstyle{definition}

\newtheorem{lemma}{Lemma}[section]
\newtheorem{theorem}[lemma]{Theorem}
\newtheorem{proposition}[lemma]{Proposition}

\newtheorem*{theorem*}{Theorem}
\newtheorem*{definition*}{Definition}

\numberwithin{equation}{section}

\begin{document}

\title[Stability of Einstein 4-manifolds satisfying a chiral curvature condition]{Stability of Einstein 4-manifolds satisfying a chiral curvature condition}

\author{Diego Artacho}
\address{D.~Artacho: Department of Mathematics, KU Leuven, Celestijnenlaan 200B, 3001 Leuven, Belgium}
\email{diego.artachodeobeso@kuleuven.be}

\begin{abstract}
Let $(M,g)$ be a compact oriented Einstein four-manifold with Einstein constant $E$ and let $\widehat{R}^+$ denote the action of the Riemann curvature tensor on self-dual two-forms. We show that if $\widehat{R}^+ < 0$, then $g$ is strictly linearly stable for the Einstein--Hilbert functional, thus giving a chiral criterion for stability. Our result is stronger than the one by Fine--Krasnov--Singer, who conclude local rigidity from $\widehat{R}^+ < 0$ by proving stability for a different action functional. Our proof proceeds by showing that $(M,g)$ admits a natural spin$^h$ structure carrying a non-zero parallel spin$^h$-spinor. We then apply a lower bound on the Lichnerowicz Laplacian on traceless symmetric two-tensors in the presence of such a spinor. 
\end{abstract}

\maketitle

\section{Introduction}

Let $(M,g)$ be a compact Einstein $n$-manifold with Einstein constant $E$. A classical problem in Riemannian geometry concerns the study of the moduli space of Einstein metrics of unit volume on $M$ modulo the action of its diffeomorphism group. Semmelmann--Schwahn \cite{SS} wrote a survey on the history and state of the art in this area. One way to study the local structure of this moduli space is to note that Einstein metrics are precisely the critical points of the Einstein-Hilbert functional, and then study its second variation at $g$. The metric $g$ is said to be \emph{strictly linearly stable} if $\Delta_L - 2E > 0$ on the space of transverse traceless symmetric two-tensors (tt-tensors), where $\Delta_L$ denotes the Lichnerowicz Laplacian -- see \cite{SS} for details. 

The first stability results were obtained by imposing restrictions on the Riemann curvature tensor $R$. This tensor acts on traceless symmetric two-tensors $h$ by 
\[
(\mathring{R}h)_{ij} = R_{iklj} h_{kl}
\]
in terms of a local orthonormal frame, where summation is taken over repeated indices. A classical result by Koiso \cite{K78} states that if $\mathring{R} < \max\{-E , E/2\}$ then $g$ is strictly linearly stable. 

The Riemann curvature tensor also acts on two-forms $\omega$ by 
\[
(\widehat{R} \omega)_{ij} = -\frac{1}{2} R_{ijkl} \omega_{kl} . 
\]
If $n=4$ and $M$ is oriented, the space of two-forms decomposes into $\pm 1$-eigenspaces of the Hodge star operator as $\Lambda^2 T^*M = \Lambda^+ \oplus \Lambda^-$. If, moreover, $g$ is Einstein, then $\widehat{R}$ preserves this decomposition -- see Section \ref{sec:curvature}. We denote by $\widehat{R}^{\pm}$ the restriction of $\widehat{R}$ to $\Lambda^{\pm}$. The main result of this paper guarantees strict linear stability of $g$ by imposing restrictions on $\widehat{R}^+$ (or $\widehat{R}^-$) alone: 

\begin{theorem}\label{thm:main}
Let $(M,g)$ be a compact oriented Einstein four-manifold. If $\widehat{R}^+ < 0$ or $\widehat{R}^- < 0$, then $g$ is strictly linearly stable. 
\end{theorem}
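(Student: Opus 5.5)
By reversing orientation it suffices to treat the case $\widehat{R}^+ < 0$. The plan follows the route in the abstract: build a spin$^h$ structure carrying a parallel spinor, then use a Weitzenböck-type lower bound for $\Delta_L$ on traceless symmetric two-tensors.

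\textbf{Setup.} On an Einstein 4-manifold the Riemann tensor acting on $\Lambda^+$ is $\widehat{R}^+ = W^+ + \frac{E}{4}\,\mathrm{id}$ (up to sign conventions). Taking the trace, $\operatorname{tr}\widehat{R}^+ = \frac{3E}{4}$, so $\widehat{R}^+<0$ forces $E<0$. Since $\widehat{R}^+$ is negative definite, $\langle \omega,\eta\rangle' := -\langle \widehat{R}^+\omega,\eta\rangle$ is a new positive definite metric on $\Lambda^+$. The natural object is the rank-three bundle $\Lambda^+$ with its Levi-Civita connection. The key observation, used by Fine--Krasnov--Singer, is that the isomorphism $\widehat{R}^+:\Lambda^+\to\Lambda^+$, viewed as an element of $\Lambda^+\otimes\Lambda^+$, is parallel up to the natural identifications; equivalently, its curvature is a definite connection.

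\textbf{Spin$^h$ structure.} Define the auxiliary $\mathrm{SO}(3)$-bundle to be $\Lambda^+$ itself, which gives a spin$^h$ structure with twisting bundle $\Lambda^+$. Its spinor bundle is $\Sigma M\otimes_{\mathbb H} H$, where $H$ is the quaternionic line bundle lifting $\Lambda^+$. The positive half splits as
\[
\Sigma^+\otimes H \cong S^+\otimes S^+ \cong \mathbb{R}\oplus\Lambda^+ .
\]
It therefore contains a trivial summand spanned by the identity section, which is parallel for the connection induced by Levi-Civita on both factors. This produces a nonzero parallel spin$^h$ spinor. The curvature of the auxiliary connection is exactly $\widehat{R}^+$, and this is where the sign hypothesis will enter.

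\textbf{Estimate.} Next I invoke the lower bound for the Lichnerowicz Laplacian on traceless symmetric tensors in the presence of a parallel spin$^h$ spinor $\varphi$. Map $h\mapsto h\cdot\varphi$ via Clifford multiplication in the first index paired with the Dirac-type operator, as in the Dai--Wang--Wei or Wang argument. A Weitzenböck formula for the twisted Dirac operator on $TM\otimes\Sigma\otimes H$ gives
\[
\langle (\Delta_L-2E)h,h\rangle \ge \|D(h\cdot\varphi)\|^2 + \langle \mathcal{K}h,h\rangle ,
\]
where $\mathcal{K}$ involves the auxiliary curvature $-\widehat{R}^+$ acting through $\Lambda^+$. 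Negativity of $\widehat{R}^+$ then makes $\mathcal{K}$ positive, possibly combined with $-E>0$. In the untwisted case one recovers Dai--Wang--Wei's $\Delta_L\ge 2E$ for parallel spinors, and the extra twisting term is what provides strictness.

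\textbf{Main obstacle.} The hard step is identifying the twisted curvature term $\mathcal{K}$ on $\mathrm{Sym}^2_0 = S^+S^-\otimes S^+S^-$ and checking that it is positive definite rather than only semidefinite. I would decompose $h\cdot\varphi$ into $\mathrm{SU}(2)\times\mathrm{SU}(2)$ components and compute the action of the twisted curvature (Einstein, so $\Lambda^-$-free) on each. The aim is a bound of the form $\mathcal{K}\ge -\lambda_{\max}(\widehat{R}^+)>0$. On tt-tensors this gives $\Delta_L-2E>0$, which is strict linear stability.
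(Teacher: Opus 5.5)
\textbf{There is a genuine gap.} Your architecture is the paper's. You take $\Lambda^+$ as auxiliary bundle, use the identity of $\mathrm{End}_{\mathbb{C}}(\Sigma^+M)\cong\Sigma^+M\otimes_{\mathbb{C}}\Sigma\Lambda^+$ as the parallel spinor $\psi$, and appeal to a Dai--Wang--Wei type lower bound for $\Delta_L$. But the step that proves the theorem is missing. Your Weitzenb\"ock inequality is only asserted, you never say what $\mathcal{K}$ is, and its positivity is stated as an aim.

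A priori the curvature term produced by $\psi$ could contain $\widehat{R}^-$, on which you have no hypothesis. It could also contain a trace part of the wrong sign; your hedge ``possibly combined with $-E>0$'' shows this is undetermined. The theorem is precisely the statement that neither happens.

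The paper settles this as follows:
\begin{enumerate}
\item It uses the exact criterion $\langle\Delta_L h,h\rangle\ge E\langle h,h\rangle-\langle\mathcal{R}_\psi h,h\rangle$, where $(\mathcal{R}_\psi h)_{pi}=-\frac12 h_{qj}R_{pjkl}\langle e_q e_i e_k e_l\cdot\psi,\psi\rangle$.
\item In an explicit model of $\Delta_4$ it computes $\langle e_qe_ie_ke_l\cdot\psi,\psi\rangle=-\varepsilon_{qikl}+\delta_{qi}\delta_{kl}-\delta_{qk}\delta_{il}+\delta_{ql}\delta_{ik}$. Hence $\mathcal{R}_\psi h=\frac12 h_{qj}\varepsilon_{qikl}R_{pjkl}+\mathring{R}h$.
\item For $h=\alpha\otimes\beta\in\Lambda^+\otimes\Lambda^-$, self-duality of $\alpha$, the $\varepsilon\varepsilon$-contraction identity and the first Bianchi identity turn the $\varepsilon$-term into $(\widehat{R}^+\otimes\mathrm{Id}-\mathrm{Id}\otimes\widehat{R}^-)h$.
\item Combined with $\mathring{R}=\widehat{R}^+\otimes\mathrm{Id}+\mathrm{Id}\otimes\widehat{R}^--E$, the $\widehat{R}^-$ terms cancel. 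This gives $\mathcal{R}_\psi=(2\widehat{R}^+-E)\otimes\mathrm{Id}_{\Lambda^-}$, and therefore $\Delta_L-2E\ge-2\,\widehat{R}^+\otimes\mathrm{Id}_{\Lambda^-}>0$.
\end{enumerate}
Without this computation, or an equivalent one identifying your $\mathcal{K}$, the proposal reduces the theorem to an unproved claim.

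\textbf{Auxiliary errors.} Several other statements are wrong, though you do not use them:
\begin{itemize}
\item Since $s=4E$, one has $\widehat{R}^+=W^++\frac{E}{3}\mathrm{Id}$ and $\mathrm{tr}\,\widehat{R}^+=E$, not $\frac{3E}{4}$. The conclusion $E<0$ survives.
\item $\widehat{R}^+$ is not parallel in general. Since $E$ is constant, that would mean $\nabla W^+=0$, which definiteness does not give. The Fine--Krasnov--Singer hypothesis is only definiteness of the curvature of the Levi-Civita connection on $\Lambda^+$.
\item $\mathrm{Sym}^2_0T^*M\otimes\mathbb{C}\cong\mathrm{Sym}^2\Sigma^+M\otimes\mathrm{Sym}^2\Sigma^-M\cong(\Lambda^+\otimes\Lambda^-)\otimes\mathbb{C}$. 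Your $S^+S^-\otimes S^+S^-$ is instead the complexification of $T^*M\otimes T^*M$. Working in the model $\Lambda^+\otimes\Lambda^-$ is what makes the cancellation above visible.
\end{itemize}
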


Here, $\widehat{R}^+ < 0$ means that for every $\omega \in \Lambda^+$ one has $\langle \widehat{R}^+ \omega , \omega \rangle < 0$. This should be regarded as a hypothesis on \emph{half of the full Riemann curvature tensor}, as opposed to Koiso's criterion, which involves all of it through $\mathring{R}$. In a recent paper, Fine--Krasnov--Singer \cite{FKS} showed that, if $\widehat{R}^+<0$, then $g$ is locally rigid. Their proof proceeds by establishing strict linear stability for a functional different from the Einstein–Hilbert functional. We obtain a stronger result, namely strict linear stability for the usual Einstein–Hilbert functional, which in particular implies local rigidity \cite{DWW05,DK24}. 

Our proof makes use of a recent bound of the Lichnerowicz Laplacian acting on traceless symmetric two-tensors obtained by the author \cite{A25} in terms of a parallel twisted spinor: 

\begin{theorem}[\cite{A25}]\label{thm:criterion}
Let $(M,g)$ be a compact and oriented Einstein $n$-manifold with Einstein constant $E$, and suppose that there exists an oriented Euclidean vector bundle $F \to M$ equipped with a metric connection $\nabla^F$ such that 
\begin{itemize}
    \item $TM \oplus F$ is spin, and so the locally defined spinor bundles $\Sigma M$ and $\Sigma F$ of $TM$ and $F$ respectively define an honest vector bundle $\Sigma M \otimes_{\mathbb{C}} \Sigma F$ over $M$; and
    \item $\Sigma M \otimes_{\mathbb{C}} \Sigma F$ has a non-zero parallel section $\psi$ with respect to the connection induced by the Levi-Civita connection of $g$ and $\nabla^F$. 
\end{itemize}
Then, the Lichnerowicz Laplacian $\Delta_{\mathrm{L}}$ on traceless symmetric two-tensors satisfies 
    \[
    \left\langle \Delta_{\mathrm{L}} h , h \right\rangle \geq E \langle h , h \rangle - \left \langle \mathcal{R}_{\psi} h , h \right \rangle ,
    \]
    where $\mathcal{R}_{\psi} \colon \mathrm{Sym}_0^2 T^*M \to T^*M \otimes_{\mathbb{R}} T^*M$ is defined with respect to a local orthonormal frame $(e_1 , \dots , e_n)$ as 
    \begin{equation}\label{eq:rpsi_def}
    (\mathcal{R}_{\psi} h)_{pi} = -\frac{1}{2} h_{qj} R_{pjkl} \left\langle e_q \cdot e_i \cdot e_k \cdot e_l \cdot \psi , \psi \right\rangle , 
    \end{equation}
    where $\cdot$ denotes Clifford multiplication and $\left\langle \cdot , \cdot \right\rangle$ is the standard Hermitian product on the spinor bundle induced by the metrics on $TM$ and $F$.
    
    In particular, if $\mathcal{R}_{\psi} < -E$, then $g$ is strictly linearly stable. \qed
\end{theorem}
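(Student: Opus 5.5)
The plan is the standard "parallel spinor" argument (as in Wang's stability proof for manifolds with parallel spinors), adapted to twisted spinors. Normalise $|\psi| = 1$ and consider the bundle $T^*M \otimes \Sigma M \otimes_{\mathbb{C}} \Sigma F$ of twisted spinor-valued one-forms. Define the bundle map
\[
\Phi \colon \mathrm{Sym}^2_0 T^*M \to T^*M \otimes \Sigma M \otimes_{\mathbb{C}} \Sigma F, \qquad \Phi(h) = h_{ij}\, e_i \otimes e_j \cdot \psi .
\]
Because $\psi$ is parallel and Clifford multiplication is parallel, $\nabla \Phi(h) = \Phi(\nabla h)$. Because $h$ is traceless, the Clifford contraction $e_i \cdot \Phi(h)_i = h_{ij} e_i \cdot e_j \cdot \psi = -(\operatorname{tr} h)\psi$ vanishes, so $\Phi(h)$ lies in the kernel of Clifford contraction.

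Next I would check that $\Phi$ is an isometry up to a constant. We have $\mathrm{Re}\langle \Phi(h), \Phi(h)\rangle = h_{ij}h_{ik}\,\mathrm{Re}\langle e_j\cdot\psi, e_k\cdot\psi\rangle$. The part of $e_k e_j$ that is antisymmetric in $j,k$ pairs to zero against the symmetric expression $h_{ij}h_{ik}$ (equivalently, it contributes only an imaginary part), so $\mathrm{Re}\langle\Phi(h),\Phi(h)\rangle = |h|^2$. Combined with $\nabla\Phi = \Phi\nabla$, this gives $\langle \nabla^*\nabla \Phi(h), \Phi(h)\rangle_{L^2} = \|\nabla h\|^2_{L^2}$.

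Then I would apply the Weitzenböck formula for the twisted Dirac operator $D$ on $T^*M \otimes \Sigma M \otimes \Sigma F$, where $T^*M$ carries the Levi-Civita connection:
\[
D^2 = \nabla^*\nabla + \tfrac{1}{2}\, e_k \cdot e_l \cdot R(e_k,e_l),
\]
with $R$ the full curvature of the tensor product bundle. This curvature splits into three pieces: the Riemann curvature acting on the $T^*M$ factor, the spin curvature acting on $\Sigma M$, and $R^F$ acting on $\Sigma F$. Since $\psi$ is parallel, its curvature in $\Sigma M\otimes\Sigma F$ vanishes. This means the $\Sigma F$-curvature acting on $\psi$ equals minus the Clifford action $\frac14 R_{klab}\, e_a e_b$ on $\psi$, and so $R^F$ can be eliminated entirely. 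On $\Phi(h)$:
\begin{itemize}
\item The $T^*M$-part gives terms in $\mathring{R}h$ and $\operatorname{Ric}$.
\item The $\Sigma M$-part, after moving $e_j$ past $R$ using the relation above, produces terms of the shape $h_{qj}R_{pjkl}\langle e_q e_i e_k e_l\psi,\psi\rangle$, i.e.\ $\mathcal{R}_\psi h$.
\item The Einstein condition turns the Ricci contributions (including the scalar-curvature term $\frac{1}{4}\mathrm{scal}$ from the Lichnerowicz formula) into multiples of $E|h|^2$.
\end{itemize}

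Comparing with $\Delta_L h = \nabla^*\nabla h + 2E h - 2\mathring{R}h$, the $\mathring{R}$ terms should cancel. I then expect an identity of the form
\[
\langle D^2\Phi(h),\Phi(h)\rangle_{L^2} = \langle \Delta_L h,h\rangle_{L^2} - E\|h\|^2 + \langle \mathcal{R}_\psi h,h\rangle_{L^2}
\]
(real parts understood). Since $\langle D^2\Phi(h),\Phi(h)\rangle_{L^2} = \|D\Phi(h)\|^2 \geq 0$, the inequality follows.

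For the last claim, if $\mathcal{R}_\psi<-E$ then $\langle\Delta_L h,h\rangle > 2E\|h\|^2$ for all nonzero $h$, in particular for tt-tensors, which is strict linear stability.

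The main obstacle is the curvature bookkeeping in the third step. One must verify the Clifford identities, namely the real part of $\langle e_j e_k\cdot,\cdot\rangle$ and the Bianchi-identity simplifications of $R_{pjkl}e_ke_le_j$. One must also check that the $\mathring{R}$ terms cancel exactly with the correct constants, rather than merely up to sign. I would first carry out the computation with $F$ trivial (the genuinely spin case), where it reduces to Wang's known identity, to fix the constants. Then I would check that the $R^F$ substitution reproduces exactly the stated form of $\mathcal{R}_\psi$.
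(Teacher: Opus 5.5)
Your proposal is correct and takes essentially the same approach as the paper, which obtains Theorem~\ref{thm:criterion} from \cite{A25} by extending the Dai--Wang--Wei parallel-spinor method (the map $h\mapsto h_{ij}\,e_i\otimes e_j\cdot\psi$ together with the Weitzenb\"ock formula for the twisted Dirac operator) to spin$^r$ structures; your target identity $\|D\Phi(h)\|^2=\langle\Delta_L h,h\rangle-E\|h\|^2+\langle\mathcal{R}_\psi h,h\rangle$ is the right one. One bookkeeping correction for when you carry it out: because $\psi$ is parallel, the $\Sigma M\otimes\Sigma F$ curvature acts on $e_j\cdot\psi$ simply as $(R(e_k,e_l)e_j)\cdot\psi$, so by the Bianchi identity that whole piece reduces to $\Phi(h\circ\mathrm{Ric})=E\,\Phi(h)$ (the $\mathrm{scal}/4$ term is cancelled by the $R^F$ contribution), whereas it is the $T^*M$-curvature piece that, after commuting $e_j$ past $e_k e_l$, produces both $\langle\mathcal{R}_\psi h,h\rangle$ and the term $-2\langle\mathring{R}h,h\rangle$, which is absorbed into $\Delta_L=\nabla^*\nabla+2E-2\mathring{R}$.
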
 

Theorem \ref{thm:criterion} is a consequence of an extension of the spin \cite{DWW05} and spin$^c$ \cite{DWW07} methods of Dai--Wang--Wei to higher order spin$^r$ structures \cite{EH16}. Indeed, in the language of spin$^r$ structures, the hypotheses of Theorem \ref{thm:criterion} can be rephrased as $(M,g)$ admitting a spin$^r$ structure with a non-zero parallel spin$^r$-spinor, where $r$ is the rank of $F$. In this paper, we shall only need the case $r = 3$, which corresponds to spin$^h$ structures. 

The proof of Theorem \ref{thm:main} has four steps. For a compact and oriented Einstein four-manifold $(M,g)$, we 
\begin{itemize}
\item observe that the space of traceless symmetric two-tensors of an oriented Riemannian four-manifolds is isomorphic to $\Lambda^+ \otimes \Lambda^-$ -- Section \ref{sec:curvature}; 
\item show that $(M,g)$ admits a canonical spin$^3$ structure carrying a non-zero parallel spin$^3$-spinor $\psi$ -- Proposition \ref{prop:spin}; 
\item express the operator $\mathcal{R}_{\psi}$ in terms of $\widehat{R}^{\pm}$ -- equation \eqref{eq:rpsi}; 
\item apply Theorem \ref{thm:criterion}. 
\end{itemize}

As a final remark, spin$^3$ structures are also known as spin$^h$ or spin$^q$ structures in the literature and have been the subject of extensive research in recent years -- see e.g. \cite{L23} and references therein. Every spin structure induces a spin$^c$ structure, and every spin$^c$ structure induces a spin$^h$ structure. It is known that every oriented four-manifold admits a spin$^c$ structure \cite{HH,TV05}, but there need not exist a parallel spin$^c$-spinor \cite{M97}. By contrast, Proposition \ref{prop:spin} shows that every oriented Riemannian four-manifold admits a natural choice of spin$^h$ structure and non-zero parallel spin$^h$-spinor. This gives Proposition \ref{prop:spin} independent geometric significance. In particular, it shows that \cite[Thm. 3.1]{HH07} cannot hold as stated, because the round sphere $S^4$ is simply connected, irreducible, non-Ricci-flat, and non-K{\"a}hler, yet Proposition \ref{prop:spin} equips it with a non-zero parallel spin$^q$ spinor.

\section{Curvature operators in dimension four} \label{sec:curvature}

Let $(M,g)$ be an oriented Riemannian four-manifold. Its Riemann curvature tensor is defined by 
\[
R(X,Y)Z = \nabla_X \nabla_Y Z - \nabla_Y \nabla_X Z - \nabla_{[X,Y]} Z . 
\]
In a local orthonormal frame $(e_1 , e_2 , e_3 , e_4)$, we write 
\[
R_{ijkl} = g(R(e_i , e_j) e_k , e_l) .  
\]
Similarly, given a two-form $\omega$ or a symmetric two-tensor $h$ we write 
\[
\omega_{ij} = \omega(e_i , e_j) , \qquad h_{ij} = h(e_i , e_j) . 
\]
The Riemann curvature tensor acts on two-forms $\omega$ and traceless symmetric two-tensors $h$ by 
\[ 
(\widehat{R} \omega)_{ij} = -\frac{1}{2} R_{ijkl} \omega_{kl} , \qquad (\mathring{R}h)_{ij} = R_{iklj}h_{kl} .  
\]
These are referred to as the \emph{curvature operators of the first and second kind} respectively. Our conventions for $R$ are such that our $\widehat{R}$ on two-forms coincides with that of \cite{FKS, CGT, SS}, and our $\mathring{R}$ on traceless symmetric two-tensors coincides with that of \cite{SS} but differs from that of \cite{CGT} by a sign. In a space of constant sectional curvature $K$, our definitions yield $\widehat{R} = K \, \mathrm{Id}$ and $\mathring{R} = -K \, \mathrm{Id}$. 

A special feature of dimension four is that the Hodge star operator $\star$ defines a linear isomorphism on two-forms that squares to the identity, and hence induces a decomposition 
\[
\Lambda^2 T^*M = \Lambda^+ \oplus \Lambda^- 
\]
into $\pm 1$-eigenspaces of $\star$. It is easy to see that 
\[
\omega \in \Lambda^{\pm} \quad \text{if and only if} \quad \omega_{ij} = \pm \frac{1}{2} \varepsilon_{ijkl} \omega_{kl} ,
\] 
where $\varepsilon_{ijkl}$ is the sign of the permutation $(i,j,k,l)$ of $(1,2,3,4)$. 

Another way of seeing that dimension four is special is that the Lie algebra $\mathfrak{so}(4)$ is the only reducible one among the special orthogonal algebras. In fact, if $\lambda_n \colon \mathrm{Spin}(n) \to \mathrm{SO}(n)$ is the usual double cover, we have that $\mathrm{Spin}(4) \cong \mathrm{Spin}(3) \times \mathrm{Spin}(3)$, and denoting by $\mathrm{pr}_{\pm}$ the projection to the first and second component respectively we have a commutative diagram of Lie groups and homomorphisms 
\[
\begin{tikzcd}
\mathrm{Spin}(4) \arrow[r, "\mathrm{pr}_{\pm}"] \arrow[d, "\lambda_4"'] & \mathrm{Spin}(3) \arrow[d, "\lambda_3"] \\
\mathrm{SO}(4) \arrow[r, "\widehat{\mathrm{pr}}_{\pm}"']                 & \mathrm{SO}(3)
\end{tikzcd}
\]

If $\mathrm{Fr} \to M$ denotes the principal $\mathrm{SO}(4)$-bundle of oriented orthonormal frames of $(M,g)$, then to any representation $\rho$ of $\mathrm{SO}(4)$ corresponds the associated vector bundle $\mathrm{Ass}(\mathrm{Fr} ,\rho)$, which comes equipped with a covariant derivative induced from the Levi-Civita connection on $\mathrm{Fr}$. For example, $TM = \mathrm{Ass}(\mathrm{Fr},\mathrm{Id}_{\mathrm{SO}(4)})$. Moreover, $\Lambda^{\pm} = \mathrm{Ass}(\mathrm{Fr}, \widehat{\mathrm{pr}}_{\pm})$.

If $g$ is an Einstein metric, the operator $\widehat{R}$ respects the decomposition $\Lambda^2 T^*M = \Lambda^+ \oplus \Lambda^-$, i.e., $\widehat{R}(\Lambda^{\pm}) \subseteq \Lambda^{\pm}$. We denote by $\widehat{R}^{\pm}$ the restriction of $\widehat{R}$ to $\Lambda^{\pm}$. 

Another useful fact is that the space of traceless symmetric two-tensors $\mathrm{Sym}_0^2 T^*M$ is isomorphic to $\Lambda^+ \otimes_{\mathbb{R}} \Lambda^-$ via 
\begin{equation}\label{eq:sym_lambda}
(\alpha \otimes \beta)_{ij} = \alpha_{ik} \beta_{jk} . 
\end{equation}

In light of this isomorphism, it is natural to ask whether the action of the operator $\mathring{R}$ can be expressed in terms of $\widehat{R}$. Although a more general result could be proved, we will only need the following:  

\begin{proposition}[{\cite[Thm. 1.8]{CGT}}] \label{prop:rcirc}
    Let $(M,g)$ be a compact and oriented Einstein four-manifold with Einstein constant $E$. Then, 
    \[
    \mathring{R} =  (\widehat{R}^+ \otimes \mathrm{Id}_{\Lambda^-}) + (\mathrm{Id}_{\Lambda^+} \otimes \widehat{R}^-) - E \, \mathrm{Id}_{\Lambda^+ \otimes \Lambda^-} . \hfill 
    \] \qed
\end{proposition}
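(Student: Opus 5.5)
The plan is a direct pointwise computation in a local oriented orthonormal frame, reducing the identity to algebraic statements about the Weyl tensor and the Einstein condition. Since both sides are tensorial, it suffices to check the formula on decomposable elements $h = \alpha\otimes\beta$ with $\alpha\in\Lambda^+$, $\beta\in\Lambda^-$, so $h_{ij}=\alpha_{ik}\beta_{jk}$ as in \eqref{eq:sym_lambda}. First I will record the standard decomposition of the curvature of an Einstein four-manifold:
\[
R = W^+ + W^- + \tfrac{E}{3}\,\tfrac{1}{2}\, g\owedge g ,
\]
with no traceless Ricci part. Correspondingly, $\widehat R^\pm = W^\pm + \tfrac{E}{3}\mathrm{Id}$ on $\Lambda^\pm$, up to sign conventions. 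I will fix these conventions using the normalisation stated in the paper: on a space form, $\widehat R=K\,\mathrm{Id}$ and $\mathring R=-K\,\mathrm{Id}$, with $E=3K$.

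By linearity in $R$, it then suffices to verify the identity separately for three pieces.

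For the constant-curvature piece, the paper's normalisation gives $\mathring R = -\tfrac{E}{3}\mathrm{Id}$. The right-hand side gives $\tfrac{E}{3}+\tfrac{E}{3}-E = -\tfrac{E}{3}$. The two agree.

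For the $W^+$ piece, I must show that $\mathring{W^+}(\alpha\otimes\beta) = (W^+\alpha)\otimes\beta$. Write $W^+_{ijkl} = \sum_{a,b} c_{ab}\,\omega^a_{ij}\omega^b_{kl}$ in a basis $\omega^a$ of $\Lambda^+$, with $c$ symmetric and traceless and with normalisation matching $\widehat{\cdot}$. Then
\[
(\mathring{W^+}h)_{ij} = \sum_{a,b} c_{ab}\, \omega^a_{ik}\,\omega^b_{lj}\, h_{kl} .
\]
The key algebraic facts are the quaternionic relations among the $\omega^a$ and the fact that elements of $\Lambda^+$ and $\Lambda^-$ commute as endomorphisms. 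Using these, $\omega^b_{lj}\alpha_{km}\beta_{lm}$ can be rearranged so that $\omega^b$ acts on $\alpha$ while $\beta$ passes through. One is left with a sum over $\sum_b \omega^b\alpha\omega^b$, which for imaginary quaternions equals a multiple of $\alpha$ plus terms combining with $c_{ab}$. The tracelessness of $c$ then removes the leftover term. The $W^-$ piece follows by symmetry, reversing orientation.

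I expect the main obstacle to be this Weyl-piece computation, specifically tracking signs and factors of $2$ between the conventions for $\widehat{R}$ (the factor $-\tfrac12$) and $\mathring R$. To do it, I would pick the explicit basis
\[
\omega^1 = e^{12}+e^{34},\quad \omega^2 = e^{13}-e^{24},\quad \omega^3 = e^{14}+e^{23}
\]
and the analogous anti-self-dual basis. I would then test the identity on a single $W^+$ with $c=\mathrm{diag}(2,-1,-1)$ (suitably scaled) against $h=\omega^1\otimes\eta^1$, rather than working abstractly. Invariance under $\mathrm{SO}(3)\times\mathrm{SO}(3)$, together with linearity in $c$, $\alpha$ and $\beta$, reduces the general case to this check.

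Finally, the compactness hypothesis plays no role, since the identity is pointwise.
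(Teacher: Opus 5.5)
Your argument is correct, and it takes a different route from the paper, which does not prove this proposition. The paper imports the identity from \cite[Thm.~1.8]{CGT} and only remarks that the $\mathring{R}$ there has the opposite sign.

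You verify the identity directly in the paper's conventions:
\begin{itemize}
\item You split the Einstein curvature tensor into $W^+ + W^-$ plus the constant-curvature tensor with $K = E/3$. This gives $\widehat{R}^\pm = \widehat{W^\pm} + \tfrac{E}{3}\mathrm{Id}$ exactly, since $\widehat{W^\pm}$ vanishes on $\Lambda^\mp$.
\item The normalisations $\widehat{R} = K\,\mathrm{Id}$ and $\mathring{R} = -K\,\mathrm{Id}$ dispose of the constant-curvature piece.
\item That leaves only $\mathring{W^+}(\alpha\otimes\beta) = (\widehat{W^+}\alpha)\otimes\beta$.
\item Reversing orientation for the $W^-$ piece is legitimate. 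Since $\alpha$ and $\beta$ commute as endomorphisms, $\alpha_{ik}\beta_{jk} = \beta_{ik}\alpha_{jk}$, so the identification \eqref{eq:sym_lambda} is symmetric in the two factors.
\end{itemize}
The citation is shorter, but it depends on translating conventions between sources, which is exactly where sign errors in this formula hide. Your route is self-contained and checks the formula in the conventions the paper actually uses later. It also makes plain that compactness plays no role.

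Two points to tighten.

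First, the sum you actually face is $\sum_{a,b} c_{ab}\,\omega^a\alpha\,\omega^b$, not $\sum_b \omega^b\alpha\,\omega^b$. As matrices, $h = \alpha\beta^{T} = -\alpha\beta$, and $[\omega^a,\beta]=0$, so $\mathring{W^+}h = -\bigl(\sum_{a,b} c_{ab}\,\omega^a\alpha\,\omega^b\bigr)\beta$. Take the inner product for which $\omega^a\omega^b+\omega^b\omega^a=-2\delta_{ab}$. The polarised quaternion identity $uwv+vwu = 2\langle u,v\rangle w - 2\langle u,w\rangle v - 2\langle v,w\rangle u$ then gives $\sum_{a,b} c_{ab}\,\omega^a\alpha\,\omega^b = \mathrm{tr}(c)\,\alpha - 2\,c(\alpha)$. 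For your basis, $\omega^a_{ij}\omega^b_{ij} = 4\delta_{ab}$, so $\widehat{W^+} = -2c$ exactly. Tracelessness of $c$ then closes the computation with no stray factor, and no test case is needed.

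Second, if you rely on the single test case, linearity plus $\mathrm{SO}(3)\times\mathrm{SO}(3)$-invariance is not enough on its own. After diagonalising $c$ to $\mathrm{diag}(2,-1,-1)$, you would still have to check both $\alpha=\omega^1$ and $\alpha=\omega^2$. To get away with one check you need Schur's lemma. The space $\mathrm{Hom}_{\mathrm{SO}(3)}(S^2_0\mathbb{R}^3\otimes\mathbb{R}^3,\mathbb{R}^3)$ is one-dimensional: the tensor product of the $5$- and $3$-dimensional irreducibles splits with multiplicity one into pieces of dimensions $7$, $5$, $3$. So the equivariant bilinear discrepancy in $(c,\alpha)$ is a multiple of $c(\alpha)$, which is nonzero on your test case.
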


Recall that, as we mentioned above, the convention used in \cite{CGT} makes their $\widehat{R}$ coincide with ours, whereas their $\mathring{R}$ differs from ours by a sign.

\section{A special twisted spinor} \label{sec:spin}

The aim of this section is to prove that every oriented four-manifold comes equipped with a natural parallel spin$^h$ spinor. Specifically, we will prove the following result: 

\begin{proposition}\label{prop:spin}
    Let $(M,g)$ be an oriented Riemannian four-manifold. Then, the following hold: 
    \begin{enumerate}
        \item $TM \oplus \Lambda^+$ is a spin vector bundle, and the locally defined spinor bundles $\Sigma M = \Sigma^+ M \oplus \Sigma^- M$ of $M$ and $\Sigma \Lambda^+$ of $\Lambda^+$ define an honest spinor bundle $\Sigma M \otimes_{\mathbb{C}} \Sigma \Lambda^+$over $M$; 
        \item $\Sigma^+M \otimes_{\mathbb{C}} \Sigma \Lambda^+ \cong \Sigma^+M \otimes_{\mathbb{C}} \Sigma^+M \cong \mathrm{End}_{\mathbb{C}}(\Sigma^+M)$; and 
        \item $\mathrm{Id}_{\Sigma^+M} \in \Gamma(\mathrm{End}(\Sigma^+M))$ is parallel with respect to the connection induced by the Levi-Civita connection. 
    \end{enumerate}
\end{proposition}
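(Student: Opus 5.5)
The plan is to work entirely at the level of structure groups, using the splitting $\mathrm{Spin}(4)\cong\mathrm{Spin}(3)\times\mathrm{Spin}(3)$ and the commutative diagram of Section~\ref{sec:curvature}. Locally, choose a spin structure $P\to \mathrm{Fr}$ with $\Sigma^\pm M = \mathrm{Ass}(P,\sigma_\pm)$, where $\sigma_\pm=\mathrm{std}\circ\mathrm{pr}_\pm$ is the standard $\mathbb{C}^2$ representation of $\mathrm{Spin}(3)\cong\mathrm{SU}(2)$ composed with $\mathrm{pr}_\pm$. Since $\Lambda^+=\mathrm{Ass}(\mathrm{Fr},\widehat{\mathrm{pr}}_+)$, the bundle $TM\oplus\Lambda^+$ is associated to $\mathrm{Fr}$ via the homomorphism
\[
\iota\colon \mathrm{SO}(4)\to \mathrm{SO}(4)\times\mathrm{SO}(3)\subset \mathrm{SO}(7),\qquad A\mapsto (A,\widehat{\mathrm{pr}}_+(A)).
\]
For (1) I will show that $\iota$ lifts to $\mathrm{Spin}(7)$, i.e.\ $\iota_*$ kills $\pi_1(\mathrm{SO}(4))\cong\mathbb{Z}/2$. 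Concretely, $\mathrm{Spin}(4)\to\mathrm{Spin}(7)$, $(a,b)\mapsto [(a,b),a]$ (using $\mathrm{Spin}(4)\times_{\mathbb{Z}_2}\mathrm{Spin}(3)\subset\mathrm{Spin}(7)$) sends $(-1,-1)$ to $[(-1,-1),-1]=[(1,1),1]$. It therefore descends to $\tilde\iota\colon\mathrm{SO}(4)\to\mathrm{Spin}(7)$ lifting $\iota$. The bundle $\mathrm{Fr}\times_{\tilde\iota}\mathrm{Spin}(7)$ is then a global spin structure on $TM\oplus\Lambda^+$. Its spinor bundle is $\Sigma M\otimes_{\mathbb{C}}\Sigma\Lambda^+$, which is locally the tensor product of the locally defined spinor bundles. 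The sign ambiguities of the two local factors cancel, so the tensor product is globally defined: it is associated to $\mathrm{Fr}$ via the $\mathrm{SO}(4)$-representation $(\sigma_+\oplus\sigma_-)\otimes\sigma_{3}\circ\mathrm{pr}_+$, which is trivial on $(-1,-1)$.

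For (2), the spinor representation of $\mathrm{Spin}(3)=\mathrm{SU}(2)$ is $\mathbb{C}^2$. Pulled back along $\mathrm{pr}_+$ it is exactly $\sigma_+$, so $\Sigma\Lambda^+\cong\Sigma^+M$ as locally defined bundles, compatibly with the local lifts. Hence $\Sigma^+M\otimes\Sigma\Lambda^+\cong\Sigma^+M\otimes\Sigma^+M$ as honest bundles, associated to $\sigma_+\otimes\sigma_+$. Next, $\mathrm{SU}(2)$ preserves the Hermitian form, so $\Sigma^+M\cong\overline{\Sigma^+M}\cong(\Sigma^+M)^*$ equivariantly; alternatively, $\mathbb{C}^2\cong(\mathbb{C}^2)^*$ via the invariant symplectic form. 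This gives $\Sigma^+M\otimes\Sigma^+M\cong\Sigma^+M\otimes(\Sigma^+M)^*=\mathrm{End}_{\mathbb{C}}(\Sigma^+M)$. One should also check that the spin connection on $\Sigma\Lambda^+$ induced by the Levi-Civita connection on $\Lambda^+$ matches the one on $\Sigma^+M$. This holds because both come from the same connection form on $\mathrm{Fr}$ projected by $d\widehat{\mathrm{pr}}_+=d\mathrm{pr}_+$, so all isomorphisms above are parallel.

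For (3), $\mathrm{Id}$ corresponds to the invariant vector $\mathrm{id}\in\mathrm{End}(\mathbb{C}^2)$ fixed by the conjugation action of $\mathrm{Spin}(4)$. A section of an associated bundle coming from a constant invariant vector is parallel for any connection induced from the principal bundle. Equivalently, $\nabla(\mathrm{Id})(s)=\nabla s-\mathrm{Id}(\nabla s)=0$. Transporting $\mathrm{Id}$ through the parallel isomorphisms of (2) gives a nonzero parallel section of $\Sigma M\otimes\Sigma\Lambda^+$.

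The main obstacle is bookkeeping in (1)–(2): making sure that the lift really is globally well defined, and that the identification of $\Sigma\Lambda^+$ with $\Sigma^+M$ is canonical rather than only local up to sign. I will handle this by expressing every bundle as associated directly to $\mathrm{Fr}$ via honest $\mathrm{SO}(4)$-representations that are trivial on $(-1,-1)$, so that no global spin structure on $M$ is ever needed.
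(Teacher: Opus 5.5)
Your proposal is correct and follows essentially the same route as the paper. Both arguments write every bundle in terms of $\mathrm{SO}(4)$-representations via $\mathrm{Spin}(4)\cong\mathrm{Spin}(3)\times\mathrm{Spin}(3)$, note that $(-1,-1)$ acts trivially on $\Delta_4\otimes(\Delta_3\circ\mathrm{pr}_+)$, use $\Delta_3\circ\mathrm{pr}_+\cong\Delta_4^+\cong(\Delta_4^+)^*$, and observe that the identity endomorphism is parallel. Your explicit lift $\tilde\iota\colon\mathrm{SO}(4)\to\mathrm{Spin}(7)$ and your check that the induced connections agree fill in two points the paper's proof leaves implicit: that $TM\oplus\Lambda^+$ itself is spin, and that the isomorphisms in (2) are parallel.
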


\begin{proof}
    $TM$ and $\Lambda^+$ are associated to the representations $\mathrm{Id}_{\mathrm{SO}(4)}$ and $\widehat{\mathrm{pr}}_+$ of $\mathrm{SO}(4)$ respectively. Hence, locally, their spinor bundles are associated to the representations $\Delta_4 = \Delta^+_4 \oplus \Delta^-_4$ and $\Delta_3 \circ \mathrm{pr}_+$ of $\mathrm{Spin}(4)$ respectively, where $\Delta_n$ denotes the spin representation of $\mathrm{Spin}(n)$. However, note that $\Delta^+_4 \otimes (\Delta_3 \circ \mathrm{pr}_+)$ defines a well-defined representation of $\mathrm{SO}(4)$, and hence its associated bundle is an honest vector bundle over $M$. This proves point (1). Point (2) follows from the classical representation-theoretic facts $\Delta_3 \cong \Delta^+_4 \cong(\Delta^+_4)^*$. Finally, it is clear that, for any vector bundle equipped with a connection, the identity section of its endomorphism bundle is parallel with respect to the connection induced on the endomorphism bundle. This proves point (3). 
\end{proof}

In the language of spin$^r$ geometry \cite{EH16}, $\Lambda^+$ equipped with the connection induced by the Levi-Civita connection is the auxiliary bundle of a spin$^h$ structure on $(M,g)$, and its twisted spinor bundle admits a natural parallel section. This spinor will be the key ingredient for the proof of Theorem \ref{thm:main}.  

We can make this spinor explicit by choosing a model for the spin representation. Following \cite{AHL}, let $(e_1 , e_2 , e_3 , e_4)$ be an oriented local orthonormal frame, and define
\begin{align*} 
L := \mathrm{span}_{\mathbb{C}} \left\{ x_1 := \frac{1}{\sqrt{2}} \left( e_1 - i e_2 \right) , \, x_2 := \frac{1}{\sqrt{2}} \left( e_3 - i e_4 \right) \right\} \, , \\
L' := \mathrm{span}_{\mathbb{C}} \left\{ y_1 := \frac{1}{\sqrt{2}} \left( e_1 + i e_2 \right) , \, y_2 := \frac{1}{\sqrt{2}} \left( e_3 + i e_4 \right) \right\} \, . 
\end{align*} 
Note that the spin representation of the complex Clifford algebra $\mathbb{C}l(4)$ (with the convention that $v \cdot v = - \langle v , v \rangle$) is isomorphic to the action on the exterior algebra $\Lambda^{\bullet} L'$ defined by extending 
\begin{equation}\label{eq:spinrep}
x_j \cdot \eta := i \sqrt{2} x_j \lrcorner \eta \, , \quad y_j \cdot \eta := i \sqrt{2} y_j \wedge \eta \, ,  
\end{equation} 
for $i = 1,2$ and $\eta \in \Lambda^{\bullet} L'$. The spin representation $\Delta_4$ is the restriction of this representation to $\mathrm{Spin}(4) \subset \mathbb{C}l(4)$, and moreover $\Delta^+ = \Lambda^{\bullet}_{\text{even}} L'$ and $\Delta^- = \Lambda^{\bullet}_{\text{odd}} L'$. 

Using this model, the spinor given by Proposition \ref{prop:spin} is given by 
\begin{equation}\label{eq:psi}
\psi = \frac{1}{\sqrt{2}} \left( 1 \otimes 1 + (y_1 \wedge y_2) \otimes (y_1 \wedge y_2) \right) . 
\end{equation}

It is now easy to check using \eqref{eq:spinrep} that $\psi$ is annihilated by the action of every $e_i \cdot e_j \in \mathfrak{spin}(4)$ with $i < j$, and thus defines a parallel section of $\Sigma^+ M \otimes_{\mathbb{C}} \Sigma^+ M$. For the sake of clarity, and because we will need to do calculations in Section \ref{sec:final}, we show that $e_1 \cdot e_3 \in \mathfrak{spin}(4)$ annihilates $\psi$. Indeed, 
\begin{align*}
    \sqrt{2} \, e_1 e_3 \cdot \psi &= (e_1 e_3 \cdot 1) \otimes 1 + 1 \otimes (e_1 e_3 \cdot 1) + (e_1 e_3 \cdot y_1 \wedge y_2) \otimes y_1 \wedge y_2 + y_1 \wedge y_2 \otimes (e_1 e_3 \cdot y_1 \wedge y_2) . 
\end{align*}

But 
\[
e_1 e_3 \cdot 1 = e_1 \cdot (i y_2) = - y_1 \wedge y_2 \quad \text{and} \quad e_1 e_3 \cdot y_1 \wedge y_2 = e_1 \cdot (-i y_1) = 1 ,  
\]
and hence $e_1 e_3 \cdot \psi = 0$. 

Later we will need the following Lemma, whose proof is a straightforward calculation using \eqref{eq:spinrep}: 

\begin{lemma}\label{lem:psi}
    Let $q,i,k,l \in \{1,2,3,4\}$ and $\psi$ be given by \eqref{eq:psi}. Then, 
    \[
    \left\langle e_q \cdot e_i \cdot e_k \cdot e_l \cdot \psi , \psi \right\rangle = - \varepsilon_{qikl} + \delta_{qi}\delta_{kl} - \delta_{qk}\delta_{il} + \delta_{ql}\delta_{ik} ,  
    \]
    where $\delta$ is Kronecker's delta and $\varepsilon_{qikl}$ is the sign of the permutation $(q,i,k,l)$ of $(1,2,3,4)$. \qed
\end{lemma}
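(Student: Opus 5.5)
The plan is to reduce the pairing to a trace on $\Sigma^+M$, and then evaluate that trace with standard Clifford trace identities.

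\emph{Reduction to a trace.} In the model \eqref{eq:spinrep}, the vectors $u_1 := 1$ and $u_2 := y_1\wedge y_2$ form a unitary basis of $\Delta^+_4 = \Lambda^{\bullet}_{\mathrm{even}}L'$. I will confirm this with a one-line check, or else rescale. By \eqref{eq:psi} we have $\psi = \frac{1}{\sqrt 2}\sum_a u_a\otimes u_a$. Clifford multiplication by $A := e_q e_i e_k e_l$ acts only on the first tensor factor, and it preserves $\Delta^+_4$ because $A$ is even. Hence
\[
\langle A\cdot\psi,\psi\rangle=\tfrac12\sum_{a,b}\langle Au_a,u_b\rangle\langle u_a,u_b\rangle=\tfrac12\,\mathrm{tr}\big(A|_{\Delta^+_4}\big).
\]
This reflects the fact that $\psi$ is $\mathrm{Id}_{\Sigma^+M}$, normalised to unit length.

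\emph{Splitting the trace with the volume element.} Let $\omega := e_1e_2e_3e_4$. It squares to $1$ and acts on $\Delta^\pm_4$ by opposite signs $\pm s$, where $s\in\{\pm1\}$. Then
\[
\mathrm{tr}(A|_{\Delta^+_4}) = \tfrac12\big(\mathrm{tr}_{\Delta_4}(A) + s\,\mathrm{tr}_{\Delta_4}(\omega A)\big).
\]
For the first term I use the standard Clifford trace identities on the $4$-dimensional module, with the convention $v\cdot v=-|v|^2$. These are $\mathrm{tr}(e_qe_i)=-4\delta_{qi}$, and the trace of a product of four generators is given by Wick-type pairing:
\[
\mathrm{tr}_{\Delta_4}(e_qe_ie_ke_l)=4(\delta_{qi}\delta_{kl}-\delta_{qk}\delta_{il}+\delta_{ql}\delta_{ik}).
\]
Both facts follow from anticommutation together with cyclicity, and from the fact that a product of distinct generators other than the identity (with $\omega$ included up to sign in dimension $4$) has trace zero in the relevant cases. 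For the second term, $\mathrm{tr}(\omega A)$ vanishes unless $q,i,k,l$ are distinct. In that case $A=\varepsilon_{qikl}\,\omega$, so $\mathrm{tr}(\omega A)=4\varepsilon_{qikl}$. Combining, $\langle A\psi,\psi\rangle = \delta_{qi}\delta_{kl}-\delta_{qk}\delta_{il}+\delta_{ql}\delta_{ik} + s\,\varepsilon_{qikl}$.

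\emph{Main obstacle: the sign $s$.} The delicate point is determining $s$, which must come out as $s=-1$. I will compute $\omega\cdot 1$ directly from \eqref{eq:spinrep}, writing $e_1=\frac{1}{\sqrt2}(x_1+y_1)$, $e_2=\frac{i}{\sqrt2}(x_1-y_1)$, and similarly for $e_3,e_4$. The computation already done for $e_1e_3\cdot 1$ is a useful consistency check here. Finding $e_1e_2\cdot 1 = e_3e_4\cdot 1 = \pm i$ gives $\omega\cdot1=-1$, which is the required sign. As a sanity check, I will also verify one degenerate case directly, e.g.\ $q=i$, $k=l$, where the formula gives $1$, consistent with $e_qe_qe_ke_k=1$.
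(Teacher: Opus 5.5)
Your proof is correct, but it takes a different route from the paper. The paper gives no details and only says the lemma is a straightforward calculation using \eqref{eq:spinrep}. That means evaluating $e_qe_ie_ke_l$ on $1$ and $y_1\wedge y_2$ in the explicit model, as is done for $e_1e_3$, and reading off the pairing index pattern by index pattern.

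You use the model for only two facts, and both check out:
\begin{itemize}
\item $\{1,\,y_1\wedge y_2\}$ is orthonormal. This holds because, for the Hermitian product induced from $L'$, the operators $e_1\cdot = i(x_1\lrcorner{} + y_1\wedge{})$ and $e_2\cdot = y_1\wedge{} - x_1\lrcorner{}$ (and similarly $e_3, e_4$) are skew-Hermitian.
\item The chirality sign is $s=-1$. One finds $e_1e_2\cdot 1 = e_3e_4\cdot 1 = i$, hence $\omega\cdot 1=-1$; similarly $\omega\cdot(y_1\wedge y_2) = -y_1\wedge y_2$.
\end{itemize}
Everything else is universal. The reduction $\langle A\psi,\psi\rangle=\tfrac12\,\mathrm{tr}(A|_{\Delta^+_4})$ uses only the explicit form $\psi=\tfrac1{\sqrt2}\sum_a u_a\otimes u_a$ with orthonormal $u_a$ and $A$ acting on the first factor. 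It does not depend on how $\mathfrak{spin}(4)$ acts on the second factor, so your remark identifying $\psi$ with the normalised identity is just a heuristic you never actually use. The rest is the projector $\tfrac12(1+s\omega)$ and the four-generator trace identity. That identity follows from anticommutation and cyclicity alone, so your aside about traces of products of distinct generators is unnecessary for it.

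What your approach buys is an explanation of the shape of the formula. The $\delta\delta$ terms are the standard trace over the full spin module, and the $\varepsilon_{qikl}$ term is exactly the chirality correction. So the only convention-sensitive input is how $\Delta^+=\Lambda^{\mathrm{even}}L'$ is tied to the orientation, and you avoid enumerating index cases. The paper's direct calculation is more elementary and self-contained, but it is more bookkeeping and gives no such insight.
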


\section{Proof of the main theorem} \label{sec:final}

Let $(M,g)$ be a compact and oriented Einstein four-manifold with Einstein constant $E$, and let $\psi$ be the spinor given by Proposition \ref{prop:spin} normalised to unit length. As in \cite{A25}, $\psi$ determines a bundle map 
\[
\mathcal{R}_{\psi} \colon \mathrm{Sym}^2_0 T^*M \to T^*M  \otimes_{\mathbb{R}} T^*M
\]
that is given by \eqref{eq:rpsi_def} in terms of a local orthonormal frame. 

We will show that, under the identification \eqref{eq:sym_lambda} of $\mathrm{Sym}_0^2 T^*M$ and $\Lambda^+ \otimes \Lambda^-$,
\begin{equation}\label{eq:rpsi}
    \mathcal{R}_{\psi} = (2\widehat{R}^+ - E \, \mathrm{Id_{\Lambda^+}}) \otimes \mathrm{Id}_{\Lambda^-}     .  
\end{equation}
Hence, applying Theorem \ref{thm:criterion}, we will finish the proof. Note that reversing the orientation swaps $\widehat{R^+}$ and $\widehat{R^-}$. 

Let $h \in \mathrm{Sym}_0^2 T^*M$. Then, by \eqref{eq:rpsi_def}
\[
\mathcal{R}_{\psi}(h)_{ip} = -\frac{1}{2} h_{qj} R_{pjkl} \left\langle e_q e_i e_k e_l \cdot \psi , \psi \right\rangle . 
\]

By Lemma \ref{lem:psi} and the usual symmetries of $R$, 
\[
\mathcal{R}_{\psi}(h)_{ip} = \frac{1}{2} h_{qj} \varepsilon_{qikl} R_{pjkl} + h_{qj} R_{pjqi} = \frac{1}{2} h_{qj} \varepsilon_{qikl} R_{pjkl} + (\mathring{R}h)_{ip} . 
\]

Choose $h = \alpha \otimes \beta$ for $\alpha \in \Lambda^+$ and $\beta \in \Lambda^-$, and use self-duality of $\alpha$  
\[
\alpha_{qr} = \frac{1}{2} \varepsilon_{qrmn} \alpha_{mn}
\]
and the classical identity 
\[
\varepsilon_{qrmn} \varepsilon_{qikl} = \delta_{ri}\delta_{mk}\delta_{nl}+\delta_{rk}\delta_{ml}\delta_{ni}+\delta_{rl}\delta_{mi}\delta_{nk} - \delta_{ri}\delta_{ml}\delta_{nk}-\delta_{rk}\delta_{mi}\delta_{nl}
-\delta_{rl}\delta_{mk}\delta_{ni}
\]
together with the usual symmetries of the Riemann curvature tensor to calculate: 
\begin{align*}
    \frac{1}{2} h_{qj} \varepsilon_{qikl} R_{pjkl} &= \frac{1}{2} \alpha_{qr} \beta_{jr} \varepsilon_{qikl} R_{pjkl} = \frac{1}{4} \alpha_{mn} \beta_{jr} R_{pjkl} \varepsilon_{qrmn} \varepsilon_{qikl} \\
    &= \frac{1}{2} \alpha_{kl} \beta_{ji} R_{pjkl} + \alpha_{li} \beta_{jk} R_{pjkl} . 
\end{align*}

By the First Bianchi Identity and relabelling the indices, 
\begin{align*}
    \alpha_{li} \beta_{jk} R_{pjkl} &= - \alpha_{li} \beta_{jk} R_{pklj} - \alpha_{li} \beta_{jk} R_{pljk} = - \alpha_{li} \beta_{jk} R_{pjkl} - \alpha_{li} \beta_{jk} R_{pljk} . 
\end{align*}

Hence, 
\[
\alpha_{li} \beta_{jk} R_{pjkl} = - \frac{1}{2} \alpha_{li} \beta_{jk} R_{pljk}
\]
and thus 
\begin{align*}
\frac{1}{2} h_{qj} \varepsilon_{qikl} R_{pjkl} &= \frac{1}{2} \alpha_{kl} \beta_{ji} R_{pjkl} - \frac{1}{2} \alpha_{li} \beta_{jk} R_{pljk} \\
&= -(\widehat{R}^+ \alpha)_{pj} \beta_{ji} + \alpha_{li} (\widehat{R}^- \beta)_{pl} \\
&= ((\widehat{R}^+ \alpha) \otimes \beta)_{ip} - (\alpha \otimes (\widehat{R}^- \beta))_{ip} . 
\end{align*}

Putting everything together and using Proposition \ref{prop:rcirc} finishes the proof of equation \eqref{eq:rpsi} and hence of Theorem \ref{thm:main}. 

\section*{Acknowledgements}

 This work has been partially supported by FWO and FNRS under EOS project G0I2222N.

\bibliographystyle{alphaurl}
\bibliography{references.bib}

\end{document}